\theoremstyle{plain}
\newtheorem{thm}{Theorem}
\newtheorem{cor}[thm]{Corollary}
\newtheorem{lem}[thm]{Lemma}
\newtheorem{rem}[thm]{Remark}
\newtheorem{defin}[thm]{Definition}
\newcommand{\R}{\mathbb{R}}
\newcommand{\N}{\mathbb{N}}
\def\multiset#1#2{\ensuremath{\left(\kern-.2em\left(\genfrac{}{}{0pt}{}{#1}{#2}\right)\kern-.2em\right)}}
\begin{document}

\title{On rank range of interval matrices}
\author{Elena Rubei}
\date{}
\maketitle

{\footnotesize\em Dipartimento di Matematica e Informatica ``U. Dini'', 
viale Morgagni 67/A,
50134  Firenze, Italia }

{\footnotesize\em
E-mail address: elena.rubei@unifi.it}

\def\thefootnote{}
\footnotetext{ \hspace*{-0.36cm}
{\bf 2010 Mathematical Subject Classification:} 15A99, 15A03

{\bf Key words:} interval matrices, rank}

\begin{abstract} 
An interval matrix is a  matrix 
whose entries are intervals in $\R$.
Let $p , q \in \N-\{0\}$ and let   $\mu =( [m_{i,j}, M_{i,j}])_{i,j}$ be a $p \times q$ interval matrix; 
given a $p \times q$ matrix $A$   with entries in $\R$, we say that  $ A \in \mu $ if $a_{i,j} \in [m_{i,j}, M_{i,j}] $ for any $i,j$.  We establish a criterion to say if an interval matrix contains a matrix of rank $1$.
Moreover we determine the maximum rank of the matrices contained in a given interval matrix.  Finally, for any interval matrix  $\mu$ with no more than $3$ columns,  we describe a way to find the  
range of the ranks of the matrices contained 
in $\mu$.
\end{abstract}

\section{Introduction}

Let $p , q \in \N-\{0\}$;
a    $p \times q$  interval matrix is a  $p \times q$  matrix 
whose entries are intervals in $\R$; let   $\mu =( [m_{i,j}, M_{i,j}])_{i,j}$ be  a  $p \times q$  interval matrix, where
 $m_{i,j}, M_{i,j}$ are real numbers with $m_{i,j} \leq M_{i,j}$  for any $i$ and $j$; 
given a  $p \times q$  matrix $A$  with entries in $\R$, we say that  $ A \in \mu $ if $a_{i,j} \in [m_{i,j}, M_{i,j}] $ for any $i,j$. 
In this paper we investigate about the range of the ranks of the matrices contained in $\mu$.

There are several papers studying when an interval $p \times q $ matrix $\mu$ has full rank, that is when all the matrices contained in $\mu$ have rank equal to $\min\{p,q\}$. 
For any  $p \times q $ interval matrix $\mu = ( [m_{i,j}, M_{i,j}])_{i,j}$ with $m_{i,j} \leq M_{i,j}$,
let $C^{\mu}$,$\Delta^{\mu}$ and $|\mu|$ be 
 the  $p \times q$   matrices such that $$ C^{\mu}_{i,j}= \frac{m_{i,j}+ M_{i,j}}{2}, \hspace*{1cm} \Delta^{\mu}_{i,j}= \frac{M_{i,j}- m_{i,j}}{2}, \hspace*{1cm} 
|\mu|_{i,j} = \max\{|m_{i,j}|,| M_{i,j}|\} $$ for any $i,j$. 
The following theorem
characterizes full-rank square interval matrices:

\begin{thm} {\bf (Rohn, \cite{Rohn})} 
Let   $\mu =( [m_{i,j}, M_{i,j}])_{i,j}$ be  a  $p \times p$  interval matrix, where
 $m_{i,j} \leq M_{i,j}$  for any $i,j$.
Let $Y_p=\{-1,1\}^p$ and, for any $x
 \in Y_p$, denote by $T_x$ the diagonal matrix whose diagonal is $x$; moreover
 define
  $$C^{\mu}_{x,y} = C^{\mu} - T_x \, \Delta^{\mu} \, T_y.$$

Then  $\mu$ is a full-rank interval matrix 
if and only if,  for each
$x,y \in Y_p$,  $$det(C^{\mu})det(C^{\mu}_{x,y})>0. $$
\end{thm}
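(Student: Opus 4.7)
My plan is to prove the two implications separately. For the easy direction I use the convexity (hence connectedness) of $\mu$ together with continuity of the determinant; for the hard one I combine a preliminary observation that places each $C^{\mu}_{x,y}$ inside $\mu$ with the Oettli--Prager characterization of singular interval matrices. The preliminary observation: each $C^{\mu}_{x,y}$ lies in $\mu$, since its $(i,j)$-entry $C^{\mu}_{i,j}-x_iy_j\Delta^{\mu}_{i,j}$ is either $m_{i,j}$ or $M_{i,j}$ according as $x_iy_j=+1$ or $-1$. Thus the theorem compares determinants of certain specific members of $\mu$.

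For the necessity, assume $\mu$ is full-rank. Being a product of closed intervals, $\mu\subseteq\R^{p\times p}$ is convex, hence path-connected; the determinant is a continuous, nowhere-vanishing real function on $\mu$, so it has constant sign on $\mu$. Since $C^{\mu}$ and $C^{\mu}_{x,y}$ both lie in $\mu$, their determinants share this sign, giving $\det(C^{\mu})\det(C^{\mu}_{x,y})>0$ for all $x,y\in Y_p$.

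For the sufficiency, assume $\det(C^{\mu})\det(C^{\mu}_{x,y})>0$ for every $x,y\in Y_p$; then $\det C^{\mu}\ne 0$ and every $C^{\mu}_{x,y}$ is nonsingular. I would argue by contradiction: suppose $\mu$ contains a singular matrix. The Oettli--Prager characterization of singularity for interval matrices produces $v\in\R^p\setminus\{0\}$ with $|C^{\mu}v|\le\Delta^{\mu}|v|$ componentwise. Setting $y_j=\operatorname{sign}(v_j)$, $x_i=\operatorname{sign}((C^{\mu}v)_i)$ and $u=|v|$, a direct manipulation gives
\[
T_x\,C^{\mu}_{x,y}\,T_y\,u \;=\; |C^{\mu}v|-\Delta^{\mu}|v|\;\le\;0
\]
componentwise, with $u\ge 0$, $u\ne 0$. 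The plan is then to convert this into the statement that some $C^{\mu}_{x,y}$ has determinant of sign opposite to $\det(C^{\mu})$ (or zero), contradicting the hypothesis.

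The main obstacle is the last step: the displayed componentwise inequality need not be an equality in every coordinate, so one cannot directly conclude $C^{\mu}_{x,y}v=0$. The remedy I would pursue is a parametric continuation: introduce the family $\mu_s$ with $C^{\mu_s}=C^{\mu}$ and $\Delta^{\mu_s}=s\Delta^{\mu}$, and let $s^{\ast}$ be the infimum of those $s\in[0,1]$ for which $\mu_s$ contains a singular matrix. At $s^{\ast}$ the Oettli--Prager inequality for $\mu_{s^{\ast}}$ is saturated in enough coordinates that, exploiting the freedom in choosing $x_i,y_i$ wherever both sides vanish, one constructs sign vectors $(x,y)$ making $C^{\mu_{s^{\ast}}}_{x,y}$ singular; a careful sign-tracking then transports this conclusion back to the original $\mu$, yielding a pair $(x,y)$ with $\det(C^{\mu})\det(C^{\mu}_{x,y})\le 0$. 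This parametric / sign-tracking step is where I expect the bulk of the technical work to lie.
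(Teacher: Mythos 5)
First, a point of reference: the paper does not prove this statement at all --- it is quoted verbatim from Rohn's work (\cite{Rohn}) as background, so there is no in-paper proof to compare against; your attempt has to stand on its own.

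Your necessity direction is correct and complete: each $C^{\mu}_{x,y}$ has $(i,j)$-entry $C^{\mu}_{i,j}-x_iy_j\Delta^{\mu}_{i,j}\in\{m_{i,j},M_{i,j}\}$, hence lies in $\mu$, and the nonvanishing continuous determinant on the convex set $\mu$ has constant sign. The sufficiency direction, however, has a genuine gap, and you have located it yourself: from $|C^{\mu}v|\le\Delta^{\mu}|v|$ you only get the componentwise inequality $T_x\,C^{\mu}_{x,y}\,T_y\,u\le 0$ with $u=|v|\ge 0$, $u\ne 0$, which does not make any $C^{\mu}_{x,y}$ singular. The proposed remedy does not close this gap. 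In the parametric family $\Delta^{\mu_s}=s\Delta^{\mu}$, writing $s(v)=\max_i |C^{\mu}v|_i/(\Delta^{\mu}|v|)_i$, the critical value $s^{\ast}$ is a min--max, and at a minimizer $v^{\ast}$ the inequality $|C^{\mu}v^{\ast}|\le s^{\ast}\Delta^{\mu}|v^{\ast}|$ is forced to be tight in \emph{at least one} coordinate, not in all $p$ of them; ``saturated in enough coordinates'' is exactly the assertion that needs proof, and as stated it is false for a generic minimizer. Singularity of $C^{\mu_{s^{\ast}}}_{x,y}$ requires equality in every coordinate, so the continuation argument, without a substantial additional idea, does not produce the contradiction.

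The known proofs (Baumann, Rohn) close precisely this gap with a more delicate combinatorial step: either an extremal choice of the null vector (maximizing a suitable linear functional over the convex hull of the normalized solution set of $|C^{\mu}x|\le\Delta^{\mu}|x|$ in a fixed orthant) or a lemma tracking how $\mathrm{sign}(\det C^{\mu}_{x,y})$ and the solution of an auxiliary linear system change when a single component of $x$ or $y$ is flipped, iterated until the sign pattern ``accords.'' Some device of this kind is indispensable; your sketch stops exactly where the real work begins. You should also state explicitly that you are importing the Oettli--Prager/Baumann singularity characterization (which is the square case of the paper's Theorem 2) as an external tool --- that is legitimate, but it is not proved here either.
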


See  \cite{Rohn} and  \cite{Rohn2} for other characterizations. 
The following theorem characterizes 
full-rank $p \times q$ interval matrices, see
\cite{Rohn3}, \cite{Rohn4}, \cite{Shary}:

\begin{thm}  {\bf (Rohn)} 
A $p \times q$ interval matrix $\mu$ with $p \geq q$ has full rank if and only if the system
of inequalities $$\hspace*{2cm} |C^{\mu} x| \leq \Delta^{\mu}
|x|, \hspace*{1.5cm} x \in \R^q$$ has only the trivial solution $x=0$. 
\end{thm}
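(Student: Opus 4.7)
The plan is to rewrite the full-rank condition in terms of kernels. Since $p \geq q$, a matrix $A \in \mu$ fails to have rank $q$ exactly when there exists a nonzero $x \in \R^q$ with $Ax = 0$. Any $A \in \mu$ can be written as $A = C^{\mu} + E$, where $E$ is a $p \times q$ real matrix satisfying $|E_{i,j}| \leq \Delta^{\mu}_{i,j}$ entrywise. Thus failure of full rank is equivalent to the existence of such an $E$ and a nonzero $x$ with $C^{\mu} x = -E x$.

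For the forward direction, suppose such $E$ and $x$ exist. Taking absolute values componentwise and applying the triangle inequality yields, for each row $i$,
\[
|(C^{\mu} x)_i| = \Bigl|\sum_j E_{i,j} x_j \Bigr| \leq \sum_j |E_{i,j}|\,|x_j| \leq \sum_j \Delta^{\mu}_{i,j} |x_j| = (\Delta^{\mu} |x|)_i,
\]
so $x$ is a nontrivial solution of $|C^{\mu} x| \leq \Delta^{\mu} |x|$.

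For the converse, assume $x \neq 0$ satisfies $|C^{\mu} x| \leq \Delta^{\mu} |x|$. The goal is to construct $E$ row by row. For each row $i$, we must choose real numbers $E_{i,1}, \ldots, E_{i,q}$ with $|E_{i,j}| \leq \Delta^{\mu}_{i,j}$ and $\sum_j E_{i,j} x_j = -(C^{\mu} x)_i$. As the $E_{i,j}$ range independently over $[-\Delta^{\mu}_{i,j}, \Delta^{\mu}_{i,j}]$, the linear functional $\sum_j E_{i,j} x_j$ attains every value in the closed interval $[-(\Delta^{\mu} |x|)_i,\,(\Delta^{\mu} |x|)_i]$ (a concrete choice is $E_{i,j} = -\Delta^{\mu}_{i,j}\,\mathrm{sign}(x_j)\,(C^{\mu} x)_i / (\Delta^{\mu}|x|)_i$ when the denominator is nonzero, and $E_{i,j} = 0$ otherwise, after first checking $(C^\mu x)_i = 0$ in the degenerate case). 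The hypothesis $|(C^{\mu} x)_i| \leq (\Delta^{\mu} |x|)_i$ guarantees that $-(C^{\mu} x)_i$ lies in this interval, so a valid choice exists. Assembling these rows produces $E$ with $|E| \leq \Delta^{\mu}$ and $(C^{\mu} + E)x = 0$, so $A := C^{\mu} + E \in \mu$ is rank-deficient.

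The main conceptual content is just the equivalence of the two formulations; the hard part, if any, is being careful with the degenerate case where some coordinate $x_j$ or some row sum $(\Delta^{\mu}|x|)_i$ vanishes, but this is handled by the explicit formula above (setting $E_{i,j} = 0$ on zero-coordinates and noting that the constraint then forces $(C^{\mu}x)_i = 0$ as well, so any admissible entries work).
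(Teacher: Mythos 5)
Your proof is correct. Note that the paper itself does not prove this statement: it is quoted from Rohn (with references given), so there is no internal proof to compare against. Your argument is the standard one: the decomposition $A = C^{\mu} + E$ with $|E_{i,j}| \leq \Delta^{\mu}_{i,j}$ exactly parametrizes $\mu$ (midpoint plus radius), the forward direction is the triangle inequality applied row by row, and the converse correctly observes that $\sum_j E_{i,j}x_j$ sweeps out all of $[-(\Delta^{\mu}|x|)_i,\,(\Delta^{\mu}|x|)_i]$, with the explicit choice $E_{i,j} = -\Delta^{\mu}_{i,j}\,\mathrm{sign}(x_j)\,(C^{\mu}x)_i/(\Delta^{\mu}|x|)_i$ verifying both the bound $|E_{i,j}|\leq\Delta^{\mu}_{i,j}$ and the row equation; the degenerate case $(\Delta^{\mu}|x|)_i=0$ forces $(C^{\mu}x)_i=0$ and is handled properly. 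The proof is complete as written.
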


  A problem which can be connected with the quoted ones
  is the one of the partial matrices: let $K$ be a field; a partial matrix over $K$ is a 
matrix where only some of the entries are given and they are elements of 
$K$;
a completion of a partial  matrix is a specification of the unspecified entries. 
We say that a submatrix of a partial matrix is specified if all its entries are given.
The problem of determining whether, given a partial 
matrix, a completion  with some prescribed property exists and 
related problems have been widely studied: we quote, for instance, 
the papers 
 \cite{CD}, \cite{CJRW}, 
\cite{Woe}. In particular there is a wide literature about
the problem of determining the maximal and the minimal rank of the completions of a partial matrix.

In \cite{CJRW}, Cohen,  Johnson,  Rodman and Woerdeman determined 
the maximal rank of the completions of a partial
matrix in terms of the ranks and the sizes of its maximal  
specified submatrices; see also \cite{CD}
for the proof.
 The problem of determining the minimal rank of   the completions of a partial matrix seems more difficult  and it has been solved only in some particular cases,
  see for instance the paper \cite{Woe1} for the case of triangular matrices. We quote also the paper \cite{HHW}, where the authors estabish a criterion to say if a partial matrix has a completion of rank $1$. 
  
Also for interval matrices, the problem of determining the miminal rank of the matrices contained in 
a given interval matrix seems much more difficult than the problem  of determining the 
maximal rank.
In this paper
we establish a criterion to say if an interval matrix contains a matrix of rank $1$; the criterion is analogous to the one to 
  estabish if a partial matrix has a completion of rank $1$ found in \cite{HHW}.
Moreover we determine the maximum rank of the matrices contained in a given interval matrix.  Finally, for any interval matrix  $\mu$ with no more than $3$ columns,  we describe a way to find the  
range of the ranks of the matrices contained 
in $\mu$.

\section{Notation and first remarks}

$\bullet$  Let $\R_{>0}$ be the set $\{x \in \R | \; x >0\}$ and
let $\R_{\geq 0}$ be the set $\{x \in \R | \; x  \geq 0\}$; we define analogously $\R_{<0}$ and $\R_{ \leq 0}$.

$\bullet $ Throughout the paper  let $p , q \in \N-\{0\}$. 

$\bullet $ Let $\Sigma_p$ be the set of the permutations on $\{1,....,p\}$.

$\bullet$ For any ordered multiset $J=(j_1, \dots , j_r)$,  a {\bf multiset permutation}  $\sigma(J)$ of $J$  is an ordered arrangement of  the multiset $\{j_1, \dots , j_r\}$ where each element appears as often as it does in $J$.

$\bullet$ Let $M(p \times q, \R) $ denote the set of the $p \times q $  matrices
with entries in $\R$. For any $A
\in M(p \times q, \R)$, let $rk(A) $ denote the rank of $A$
and let $A^{(j)}$ be the $j$-th column of $A$.

$\bullet $
For any vector space $V$ over a field $K$  and any $v_1,\dots, v_k \in V$, let $\langle v_1,\dots, v_k\rangle $ be the 
span of $v_1,\dots,v_k$.

$\bullet $
 Let   $\mu =( [m_{i,j}, M_{i,j}])_{i,j}$ be a $p \times q $  interval matrix, where
  $m_{i,j} \leq M_{i,j}$  for any $i$ and $j$.  
 As we have already said, 
 given a matrix $A \in M(p \times q, \R) $, we say that  $ A \in \mu $ if and only if  $a_{i,j} \in [m_{i,j}, M_{i,j}] $ for any $i,j$.
  We define
$$mrk(\mu) = min\{rk(A)   | \; A \in \mu     \},$$
$$Mrk(\mu) = max\{rk(A) | \; A \in \mu     \}.$$
We call them respectively {\bf minimal rank} and {\bf maximal rank} of $\mu$.
Moreover, we define 
$$rkRange (\mu) =  \{ rk(A)  | \; A \in \mu \};$$ 
we call it the {\bf rank range} of $\mu$.

We say that the  entry $i,j$ of $\mu$ is a {\bf constant}
 if $ m_{i,j}= M_{i,j}$.

\begin{rem}
Let $\mu$ be an interval matrix. Observe that  $$rkRange(\mu) =
  [mrk(\mu)  , Mrk(\mu)] \cap \N.$$
\end{rem}
\begin{proof}
Obviously $rkRange(\mu) \subseteq
  [mrk(\mu)  , Mrk(\mu)] \cap \N
  $. We have to show the other inclusion.

There exist $A , B \in \mu $ with $rk(A)=
mrk(\mu) $ and $rk(B) =Mrk(\mu)$; let $(i_1,j_1),\dots, (i_k,j_k)$
be the entries of $A$ different from the corresponding entries of $B$ and, for every $r \in \{1,\dots,k\}$, let   
$C_r$ be the matrix  obtained from $A$ by changing the entries
 $(i_1,j_1),\dots, (i_r,j_r)$
 of $A$ into the corresponding entries of $B$. Obviously $C_k=B$, $C_1,\dots, C_k \in \mu$  and, for every $i$,
 the absolute value of the difference between the rank of 
 $C_i$ and the rank of $C_{i+1}$ is less than or equal to $1$.
 
 Therefore the set $\{rk(A), rk(C_1), \dots, rk(C_{k-1}), rk(B)\} $ is an interval of $\N$ and thus it must contain  
 $[rk(A), rk(B)] \cap \N$, that is 
 $ [mrk(\mu)  , Mrk(\mu)] \cap \N$; moreover,
 it is contained in $rkRange(\mu)$ (since 
   $A,C_1,\dots, C_{k-1}, B \in \mu$), so we can conclude. 
\end{proof}

\begin{defin}
We define the sum and the product of two intervals in $\R$  as follows: let $[e,f]$ and $[g,h]$ two intervals in $\R$ with $e \leq f$, $g \leq h$; we define
$$ [e,f] + [g,h] := [e+g, f+h],$$
$$ [e,f] \cdot [g,h] := [\min\{eg, eh, fg, fh\}, \max\{eg, eh, fg, fh\}].$$
In particular, for any $e \in \R$, $$ e \cdot [g,h] := [\min\{eg, eh\}, \max\{eg, eh\}].$$
\end{defin}
It is easy to see that $$\{x+y\; |\; x \in [e,f], \; y \in [g,h]\} \;= \;[e,f] + [g,h] ,$$ 
$$\{xy\; |\; x \in [e,f], \; y \in [g,h]\} \;= \;[e,f] \cdot [g,h] .$$  
Observe that in general, $([a,b]+[c,d])-[c,d] \neq [a,b]$.
\begin{defin}
Let $\mu $ be an interval matrix. We say that another interval matrix $\mu'$ is obtained from $\mu $ by  an  {\bf elementary row operation} if it is obtained from $\mu$ 
by one of the following operations:

I) interchanging two rows,

II) multiplying a row by a nonzero real number,

III)  adding to a row the multiple of another row by a real number.

In an analogous way we may define  {\bf elementary column operations}.
\end{defin}

\begin{rem} \label{op}
Let $\mu$ and $\mu'$ be two interval matrices such that 
$\mu'$ is obtained from $\mu$ by elementary row (or column) operations. Then, obviously, 
\begin{equation} \label{elop}
rkRange(\mu) \subseteq  rkRange (\mu').
\end{equation}
Moreover, if $\mu'$ is obtained from $\mu$ only by elementary row (or column) operations of kind I or II, we have  the equality in (\ref{elop}).
\end{rem}

\section{Interval matrices containing rank-zero or rank-one matrices}

\begin{rem} \label{mrk0}
Let  
   $\mu =( [m_{i,j}, M_{i,j}])_{i,j}$,  with $m_{i,j} \leq M_{i,j}$
 for any  $i \in \{1,\dots,p\}$  and $ j \in \{1,\dots,q\}$, be an interval matrix. 
 Obviously $mrk(\mu)=0$ if and only if,    for any  $i \in \{1,\dots,p\}$  and $ j \in \{1,\dots,q\}$, the interval 
 $[m_{i,j}, M_{i,j}]$ contains $0$.
 
 Moreover, 
 if $\mu'$ is the interval matrix obtained from 
 $\mu$ by deleting the columns and the rows such that all
 their entries contain $0$, we have that  $mrk(\mu)= mrk(\mu')$.

\end{rem}

\begin{defin} We say that an interval matrix $\mu $ 
is {\bf reduced} if every column and every row has at least one entry not containing $0$.
Reducing an interval matrix means eliminating every  row and every column 
such that each of its entries contains $0$. 
\end{defin}

\begin{rem} \label{po}
 Let $\mu$ be a $p \times q$ interval matrix.   Let $i \in \{1,\dots,p\}$ and $ j \in \{1,\dots,q\}$
 and
 let $\mu_{i,j}=[a,b]$ with $a \leq 0 \leq b$.
Define  $ \mu'$ and $ \mu''$  to be the  interval matrices  such that $$ \mu'_{i,j}=[a,0], \;\;\;\;\;  \;\;\; \mu''_{i,j}=[0,b]$$ and 
$\mu'_{t,s}= \mu''_{t,s} = \mu_{t,s}$ for any $(t,s) \neq (i,j)$.
 Let $r \in \N$. Then obviously there exists $A \in \mu $ with $rk(A) =r$ 
if and only if  either 
there exists $A \in \mu' $ with $rk(A) =r$ or 
there exists $A \in \mu'' $ with $rk(A) =r$.

In particular, to study whether an interval matrix
  $\mu $
contains a rank-$r$ matrix, it is sufficient to consider the case where,  for any $i,j$, either $\mu_{i,j} \subseteq \R_{\geq 0}$ or $\mu_{i,j} \subseteq \R_{\leq 0}$. 

 Moreover, by Remark
\ref{op}, we can suppose $\mu_{i,j} \subseteq \R_{\geq 0}$ for every $(i,j) $ 
such that  either $i$ or $j$ is equal to $1$.

\end{rem}

\begin{rem} \label{tronco} Let $\mu$ be 
 a reduced interval matrix such that 
$\mu_{i,j} \subseteq \R_{\geq 0}$
 for every $(i,j) $ with either $i$ or $j$ is equal to $1$.
 Obviously, if there exists $(i,j)$ such that $\mu_{i,j} \subseteq
\R_{< 0} $, then $\mu $ does not contain a rank-one matrix. 
Otherwise, that is $M_{i,j} \geq 0$ for any $i,j$,
 define $\overline{\mu}$ to be the interval matrix  such that 
 $$\overline{\mu}_{i,j} =
[ \max\{0, m_{i,j}\}, M_{i,j}]$$ 
 for any $i,j$. Obviously 
$\mu $ contains a rank-one matrix if and only if 
$\overline{\mu}$ contains a rank-one matrix.
\end{rem}

So to study when a reduced interval matrix contains a rank-one matrix it is sufficient to study the problem for a reduced interval matrix $ \mu $,  
with $\mu_{i,j} \subseteq \R_{\geq 0}$
 for every $i,j $. The following theorem 
gives an answer for such a problem  in this case.

\begin{thm} \label{casopositivo}
 Let   $\mu =( [m_{i,j}, M_{i,j}])_{i,j}$ be a   $p \times q $ reduced  interval matrix  with $p,q \geq 2$ and  $0  \leq m_{i,j} \leq M_{i,j}$
 for any  $i \in \{1,\dots,p\}$  and $ j \in \{1,\dots,q\}$. There exists 
 $A \in \mu$ with $rk(A)=1$ if and only if, 
 for any $h \in \N$
 with $2 \leq h \leq 2^{\min\{p,q\}-1}$, for any
  $i_1,\dots, i_h \in \{1,\dots, p\}$, for any
$j_1,\dots, j_h \in \{1,\dots, q\}$ and for any $\sigma \in \Sigma_h$, we have:
\begin{equation} \label{assthm}
m_{i_1, j_1 }\dots m_{i_h, j_h} \leq M_{i_1, j_{\sigma(1)}}  \dots
 M_{i_h, j_{\sigma(h)}}.\end{equation}
\end{thm}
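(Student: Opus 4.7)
The forward implication ($\Rightarrow$) is a direct multiplication. Writing $A=uv^T\in\mu$ with $rk(A)=1$ and choosing $u,v\geq 0$ componentwise (possible because every entry of $A$ is non-negative, after negating both if necessary), one has
$$\prod_{t=1}^h m_{i_t,j_t}\;\leq\;\prod_{t=1}^h u_{i_t}v_{j_t}\;=\;\Bigl(\prod_t u_{i_t}\Bigr)\Bigl(\prod_t v_{j_t}\Bigr)\;=\;\Bigl(\prod_t u_{i_t}\Bigr)\Bigl(\prod_t v_{j_{\sigma(t)}}\Bigr)\;\leq\;\prod_{t=1}^h M_{i_t,j_{\sigma(t)}}$$
because $\sigma$ permutes the $j$-indices, which is exactly (\ref{assthm}).

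For the converse ($\Leftarrow$), the plan is to seek $A=uv^T$ with $u\in\R_{\geq 0}^p$, $v\in\R_{\geq 0}^q$ by first finding an admissible column-scaling vector $v$. Since $\mu$ is reduced with non-negative entries, every row and every column contains an entry with $m_{i,j}>0$, so any solution must satisfy $u_i>0$ and $v_j>0$; a first use of (\ref{assthm}) at $h=2$ with the non-trivial $\sigma\in\Sigma_2$ then excludes the pathological possibility $M_{i,j}=0$, allowing us to assume $M_{i,j}>0$ throughout. Fixing such a $v>0$, the constraint $u_iv_j\in[m_{i,j},M_{i,j}]$ rewrites as $u_i\in\bigcap_j[m_{i,j}/v_j,M_{i,j}/v_j]$, and this intersection is non-empty for every $i$ precisely when $m_{i,j}\,v_{j'}\leq M_{i,j'}\,v_j$ for all $i,j,j'$.

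Setting $y_j=\log v_j$ turns the last system into difference constraints
$$y_j-y_{j'}\;\geq\;\max_i\bigl(\log m_{i,j}-\log M_{i,j'}\bigr),\qquad 1\leq j,j'\leq q,$$
on the $q$ unknowns $y_j$. By the Bellman--Ford feasibility criterion, such a $y$ exists iff no elementary directed cycle in the associated weighted digraph on $\{1,\dots,q\}$ has positive total weight. Extracting on each edge the row index attaining the $\max_i$, an offending cycle $j_1\to j_2\to\cdots\to j_k\to j_1$ produces indices $i_1,\dots,i_k$ with $\prod_{t=1}^k m_{i_t,j_t}>\prod_{t=1}^k M_{i_t,j_{t+1}}$ (with $j_{k+1}=j_1$), which is exactly a violation of (\ref{assthm}) with $h=k$ and $\sigma$ the cyclic shift. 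Since elementary cycles have length at most $q$, and the analogous argument that first fixes $u$ yields cycles of length at most $p$, the violating $h$ is always at most $\min\{p,q\}\leq 2^{\min\{p,q\}-1}$, well inside the hypothesized range. Thus (\ref{assthm}) excludes every such cycle, the system on $v$ is feasible, and each $u_i$ is then chosen in the resulting non-empty intersection.

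The main obstacle I anticipate is the careful bookkeeping around degenerate entries. Entries with $m_{i,j}=0$ are innocuous in the logarithmic formulation (taking $\log 0=-\infty$ makes the associated edge weight vacuous), but verifying that $u_i,v_j>0$ are forced in the reduced setting and systematically excluding $M_{i,j}=0$ via (\ref{assthm}) at $h=2$ requires some attention. A secondary remark is that the cycle analysis only needs (\ref{assthm}) for $h\leq\min\{p,q\}$, strictly tighter than the stated bound $2^{\min\{p,q\}-1}$; the slack is harmless for sufficiency and likely reflects an alternative combinatorial or inductive proof route preferred by the author.
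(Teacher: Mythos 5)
Your proof is correct, and the sufficiency direction takes a genuinely different route from the paper's. Both arguments first reduce the existence of a rank-one matrix in a reduced nonnegative $\mu$ to the solvability, in positive unknowns $c_1,\dots,c_q$, of the system $m_{i,t}/c_t \le M_{i,k}/c_k$ for all $i,t,k$ (the paper's inequality (\ref{bo1})). From there the paper eliminates $c_2,\dots,c_q$ one at a time by a multiplicative Fourier--Motzkin induction; each elimination step pairs up surviving inequalities and multiplies them, so the monomials double in length at every step, and this doubling is exactly the source of the exponent $2^{\min\{p,q\}-1}$ in the statement. You instead take logarithms and invoke the positive-cycle infeasibility criterion for difference-constraint systems: an infeasibility certificate is a simple cycle on the $q$ column-vertices, which, after extracting the row index attaining each edge weight, translates back into a violation of (\ref{assthm}) for a cyclic $\sigma$ with $h$ equal to the cycle length, hence $h\le q$; the symmetric system on the rows gives $h\le p$, and since either system is feasible exactly when a rank-one matrix exists, one may always use the smaller of the two. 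This buys a sharper statement --- only $h\le\min\{p,q\}$, distinct column indices, and cyclic permutations are needed, consistent with the fact that a general $\sigma$ decomposes into cycles --- at the cost of importing the Bellman--Ford/potential criterion, whereas the paper's induction is self-contained. Your side remarks check out: reducedness forces $u_i,v_j>0$, and (\ref{assthm}) with $h=2$ applied to a positive $m_{i,j'}$ in row $i$ and a positive $m_{i',j}$ in column $j$ rules out $M_{i,j}=0$, so every edge weight $\max_i(\log m_{i,j}-\log M_{i,j'})$ is finite and the standard feasibility criterion applies verbatim.
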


\begin{proof}
We can suppose $q \leq p$.

$\Longrightarrow$ Let 
 $A \in \mu$ with $rk(A)=1$. 
For any   $i_1,\dots, i_h \in \{1,\dots, p\}$, for any
  $j_1,\dots, j_h \in \{1,\dots, q\}$ and for any $\sigma \in \Sigma_h$, we have: 
$$m_{i_1, j_1 }\dots m_{i_h, j_h} \leq
a_{i_1, j_1 }\dots a_{i_h, j_h}= 
a_{i_1, j_{\sigma(1)}}  \dots
 a_{i_h, j_{\sigma(h)}}
\leq
M_{i_1, j_{\sigma(1)}}  \dots
 M_{i_h, j_{\sigma(h)}}.$$

$\Longleftarrow$
Observe that, since $\mu $ is reduced, there exists  
 $A \in \mu$ with $rk(A)=1$ if and only if there exist
 $x_1,\dots, x_p, c_2, \dots, c_q$ in $\R$ such that 
 $$ x_i \in [m_{i,1}, M_{i,1}] , \;\; c_j  x_i \in [m_{i,j}, M_{i,j}]
$$ for any $i,j$.
Since $\mu$ is reduced and $m_{i,j} \geq 0$, this is equivalent to ask that there exist
 $x_1,\dots, x_p, c_2, \dots, c_q$ in $\R_{>0}$ such that, for any $i,j$, 
 $$ x_i \in [m_{i,1}, M_{i,1}] , \;\; c_j  x_i \in [m_{i,j}, M_{i,j}].
$$ 

If we define $c_1 =1$, this is equivalent to the following condition:
 there exist
 $x_1,\dots, x_p, c_2, \dots, c_q$ in $\R_{>0}$ such that, 
 for any $i,j$,
 $$ x_i \in  \cap_{j \in \{2,....,q\} } \left[  \frac{m_{i,j}}{c_j}, \frac{M_{i,j}}{c_j}  \right].
$$ 

Obviously this is equivalent to say that there exist
 $ c_2, \dots, c_q$ in $\R_{>0}$ such that 
 \begin{equation} \label{bo1}
 \frac{m_{i,t}}{c_t} \leq  \frac{M_{i,k}}{c_k}  
\end{equation}
for any $i    \in \{1, \dots, p\}$, $t,k \in \{1, \dots, q\}$. 
We will use the following notation:
$$m_{\left( \begin{array}{c} i_1 , \dots , i_s\\ j_1 , \dots , j_s  \end{array} \right)}:= \; m_{i_1, j_1} \cdot \dots \cdot m_{i_s,j_s},
\hspace*{1.2cm} M_{ \left(\begin{array}{c} i_1 , \dots , i_s\\ j_1 , \dots , j_s  \end{array}\right)}:= \; M_{i_1, j_1} \cdot \dots \cdot M_{i_s,j_s}.$$

Let us prove, by induction on $r$, that, for any $r \in \{2,\dots,q\} $, there exist $c_2,\dots, c_r 
\in \R_{>0}$  such that, for any $k,t \in \{1,\dots, r\}$, $b \in [1, 2^{q-r}] \cap \N$,
 $i_1,\dots,i_b \in\{1,\dots,p\}$, $ j_1,\dots,j_{b-1} \in\{1,\dots,q\}$, $ \sigma$ multiset permutation of $( j_1, \dots, j_{b-1}, k)$, 
 we have:
\begin{equation} \label{bo2}
m_{ \left(\begin{array}{c} i_1, \dots , i_{b-1} , i_b \\
j_1 ,\dots,  j_{b-1},  t
\end{array}\right)} \;
 c_k  \; \; \leq \;\;
 M_{ \left(\begin{array}{c} i_1, \dots,  i_{b-1 } , i_b\\
\sigma(j_1, \dots, j_{b-1}, k) 
\end{array} \right)} \;
 c_t ,
\end{equation}
where $c_1 $ is defined to be $1$.
Observe that the condition above  implies $(\ref{bo1})$ (take $b=1$ and $r=q$).

As to the induction \underline{base case $r=2$}, we have to prove that there exists $c_2 \in \R_{>0}$ such that  
for any  $b, d \in  [1, 2^{q-2}] \cap \N$,
 $i_1,\dots,i_b , u_1,\dots,u_d \in\{1,\dots,p\}$, $ j_1,\dots,j_{b-1} , v_1,\dots,v_{d-1} \in \{1,\dots,q\}$,  
 $ \sigma$ multiset permutation of $(j_1, \dots, j_{b-1}, 1) $,   $ \gamma$ multiset  permutation of $( v_1, \dots, v_{d-1}, 2) $,  
\begin{equation} \label{base1}
m_{ \left(\begin{array}{c} i_1 , \dots , i_{b-1} , i_b \\
j_1, \dots , j_{b-1} , 2
\end{array}\right)} \;  \;\leq \;\; M_{ \left(\begin{array}{c} i_1 , \dots , i_{b-1 } , i_b\\
\sigma(j_1, \dots , j_{b-1},   \ 1) \end{array} \right)} 
  \; c_2
  \end{equation}
and 
\begin{equation} \label{base2}
   m_{ \left(\begin{array}{c} u_1 , \dots, u_{d-1 } , u_d\\
v_1,  \dots , v_{d-1} , 1
\end{array} \right)} 
  \; c_2 
\; \; \leq \;\; M_{ \left(\begin{array}{c} u_1 , \dots , u_{d-1} , u_d \\
\gamma(v_1, \dots  , v_{d-1}  , 2) 
\end{array}\right)} .
  \end{equation}
  Observe that, if  $$m_{ \left(\begin{array}{c} i_1 , \dots , i_{b-1} , i_b \\
j_1, \dots , j_{b-1} , 2
\end{array}\right)}=0,$$ then 
the inequality $(\ref{base1})$ is implied 
by the condition $c_2 > 0$ and, if 
$$ m_{ \left(\begin{array}{c} u_1 , \dots , u_{d-1 } , u_d\\
v_1,  \dots, v_{d-1} , 1
\end{array} \right)} =0,$$ then the inequality 
  $(\ref{base2})$ is always true.
  So we can consider only $b, d \in  [1, 2^{q-2}] \cap \N$,
 $i_1,\dots,i_b , u_1,\dots,u_d \in\{1,\dots,p\}$, $ j_1,\dots,j_{b-1} , v_1,\dots,v_{d-1} \in \{1,\dots,q\}$ such that the terms $m_{..}$ in (\ref{base1}), (\ref{base2}) are positive.
 So a $c_2$ as we search for exists if and only if 
for any  $b, d \in   [1, 2^{q-2}] \cap \N$,
 $i_1,\dots,i_b , u_1,\dots,u_d \in\{1,\dots,p\}$, $ j_1,\dots,j_{b-1} , v_1,\dots,v_{d-1} \in \{1,\dots,q\}$ such that $$m_{ \left(\begin{array}{c} i_1 , \dots , i_{b-1} , i_b \\
j_1 ,\dots,  j_{b-1} , 2
\end{array}\right)} \hspace*{1cm} \mbox{\rm and} \hspace*{1cm}  m_{ \left(\begin{array}{c} u_1 , \dots , u_{d-1 } , u_d\\
v_1,  \dots , v_{d-1} , 1
\end{array} \right)} $$ are positive,  
 $ \sigma$ multiset permutation of $( j_1, \dots, j_{b-1}, 1) $,   $ \gamma$ multiset permutation of $(v_1, \dots, v_{d-1}, 2)$,  we have that
   $$
  m_{ \left(\begin{array}{c} u_1 , \dots , u_{d-1 } , u_d , i_1 , \dots , i_{b-1} , i_b\\
v_1,  \dots , v_{d-1} , 1 , j_1, \dots , j_{b-1} , 2
\end{array} \right)}  \;\leq \;\; M_{ \left(\begin{array}{c} u_1 , \dots , u_{d-1} , u_d ,  i_1 , \dots , i_{b-1 } , i_b \\
 \gamma(v_1, \dots , v_{d-1}, 2)\, \sigma(j_1, \dots , j_{b-1}, 1)   \end{array} \right)} 
$$
and this follows from our assumption 
(\ref{assthm}).

Let us prove \underline{the induction step}.
By induction assumption we can suppose there exist  $c_2,\dots, c_{r-1} \in \R_{>0} $ such that
for any $k,t \in \{1,\dots, r-1\}$, $b \in [1, 2^{q-r+1}] \cap \N$,
 $i_1,\dots,i_b \in\{1,\dots,p\}$, $ j_1,\dots,j_{b-1} \in\{1,\dots,q\}$, $ \sigma$ multiset permutation of $( j_1, \dots, j_{b-1}, k)$, the inequality
 $(\ref{bo2})$ holds. We want to show that there exists $c_r \in \R_{>0}$ such that for any $t,l \in \{1,\dots, r-1\}$, $b, d 
\in [1, 2^{q-r}] \cap \N$,
 $i_1,\dots,i_b, u_1,\dots,u_d \in\{1,\dots,p\}$, $ j_1,\dots,j_{b-1}, v_1,\dots,v_{d-1} \in\{1,\dots,q\}$, 
 $ \sigma$ multiset  permutation of $\{ j_1, \dots, j_{b-1}, r\} $,   $ \gamma$ multiset permutation of
  $\{ v_1,\dots, v_{d-1}, l\} $,  
\begin{equation} \label{rt}
m_{ \left(\begin{array}{c} i_1 , \dots , i_{b-1} , i_b \\
j_1, \dots , j_{b-1} , t
\end{array}\right)} \;  c_r \;\; \leq \;\; M_{ \left(\begin{array}{c} i_1 , \dots , i_{b-1 } , i_b\\
\sigma(j_1, \dots,  j_{b-1}, r) \end{array} \right)} 
  \; c_t
  \end{equation}
and 
\begin{equation} \label{rl}
 M_{ \left(\begin{array}{c} u_1 , \dots , u_{d-1} , u_d \\
\gamma(v_1, \dots,  v_{d-1}, l) 
\end{array}\right)} \;
c_r \;\; \geq \;\; m_{ \left(\begin{array}{c} u_1 , \dots , u_{d-1 }, u_d\\
v_1,  \dots,  v_{d-1} , r
\end{array} \right)} 
  \; c_l .\end{equation}
Observe that when $$m_{ \left(\begin{array}{c} i_1 , \dots , i_{b-1} , i_b \\
j_1, \dots , j_{b-1} , t
\end{array}\right)} =0,$$
the inequality
 (\ref{rt}) is always verified 
   and,  if $$m_{ \left(\begin{array}{c} u_1 ,\dots , u_{d-1 } , u_d\\
v_1,  \dots , v_{d-1}  , r
\end{array} \right)} =0,$$ then  
   the inequality (\ref{rl}) is implied 
by the condition $c_r >0$. So we can consider only
 $t \in \{1,\dots, r-1\}$, $b, d \in [1, 2^{q-r}] \cap \N$,
 $i_1,\dots,i_b, u_1,\dots,u_d \in\{1,\dots,p\}$, $ j_1,\dots,j_{b-1}, v_1,\dots,v_{d-1} \in\{1,\dots,q\}$ such that 
the terms $m_{..}$ appearing   in (\ref{rt}) and
in (\ref{rl}) are positive.
Thus 
 a $c_r$  as we search for exists if and only if 
 for any $t,l \in \{1,\dots, r-1\}$, $b, d \in   [1, 2^{q-r}] \cap \N$,
 $i_1,\dots,i_b, u_1,\dots,u_d \in\{1,\dots,p\}$, $ j_1,\dots,j_{b-1}, v_1,\dots,v_{d-1}  \in\{1,\dots,q\}$ such that 
 $$m_{ \left(\begin{array}{c} i_1 , \dots , i_{b-1} , i_b \\
j_1, \dots , j_{b-1} , t
\end{array}\right)}  \hspace*{1cm}  \mbox{\rm and}  \hspace*{1cm}   m_{ \left(\begin{array}{c} u_1 ,\dots , u_{d-1 } , u_d\\
v_1,  \dots , v_{d-1} , r
\end{array} \right)}$$ are positive, 
 $ \sigma$ multiset permutation of $( j_1, \dots, j_{b-1}, r ) $,   $ \gamma$ multiset  permutation of $( v_1,\dots v_{d-1}, l) $,  we have:
 $$ 
m_{ \left(\begin{array}{c} i_1 , \dots , i_{b-1 } , i_b , u_1 , \dots , u_{d-1} , u_d \\
j_1, \dots ,j_{b-1} ,   t
, v_1 , \dots ,v_{d-1} ,   r  \end{array} \right)}
 \; c_l  \,\;
  \leq  
  M_{ \left(\begin{array}{c} i_1 , \dots , i_{b-1 } , i_b , u_1 , \dots , u_{d-1} , u_d \\
\sigma(j_1 \dots , j_{b-1}, r)
\, \gamma(v_1, \dots , v_{d-1},  l)  \end{array} \right)}
 \; c_t  
    $$
    and this is  true by induction assumption.
\end{proof}

{\bf Example.}
Let $$ \mu = 
\left( \begin{array}{cccc}
[2,3] & [1,6]   & [-2, 2]  & [-3,-1]\\ 
\, [1,2] & [2,3] & [-2,3] & [ -2,3] \\ 
 \,[1,4] & [0,2]   &  [3,4] & [-1,0]
\end{array}
\right).$$ 
By Remarks  \ref{op} and \ref{po}, the interval matrix  $\mu$ contains a rank-one matrix if and only if at least one of the following interval  matrices contains a rank-one matrix  $$\mu':= 
\left( \begin{array}{cccc}
[2,3] & [1,6]   & [0, 2]  & [1,3]\\ 
\, [1,2] & [2,3] & [-2,3] & [ -3,2] \\ 
 \,[1,4] & [0,2]   &  [3,4] & [0,1]
\end{array}
\right),
\;\;\;\;\; \mu'':=
\left( \begin{array}{cccc}
[2,3] & [1,6]   & [-2, 0]  & [1,3]\\ 
\, [1,2] & [2,3] & [-2,3] & [ -3,2] \\ 
 \,[1,4] & [0,2]   &  [3,4] & [0,1]
\end{array}
\right)
$$
and, by Remark \ref{op}, this is equivalent to ask that at least one of the interval matrices 
$\mu'$ and $\mu'''$
contains a rank-one matrix, where 
 $$ \mu''':=
\left( \begin{array}{cccc}
[2,3] & [1,6]   & [0, 2]  & [1,3]\\ 
\, [1,2] & [2,3] & [-3,2] & [ -3,2] \\ 
 \,[1,4] & [0,2]   &  [-4,-3] & [0,1]
\end{array}
\right).$$   By Remark \ref{tronco}, the interval matrix 
$\mu'''$ does not contain a rank-one matrix since $\mu'''_{i,j} \subset \R_{\geq 0}$ for any $i,j $ such that either $i$ or $j$ is equal to $1$, but $\mu'''_{3,3}
\subset \R_{< 0}$.  Hence $ \mu$  contains a rank-one matrix if and only if $\mu'$ contains a 
rank-one matrix and, by Remark \ref{tronco}, this is equivalent  
to ask  that  $\overline{\mu'}$
 contains a rank-one matrix, where  $$\overline{\mu'}:= 
\left( \begin{array}{cccc}
[2,3] & [1,6]   & [0, 2]  & [1,3]\\ 
\, [1,2] & [2,3] & [0,3] & [ 0,2] \\ 
 \,[1,4] & [0,2]   &  [3,4] & [0,1]
\end{array}
\right).$$
By Theorem \ref{casopositivo}, the interval matrix 
$\overline{\mu'}$ does not contain a rank-one matrix because for instance 
the product of the mimina of the intervals 
$\overline{\mu'}_{1,1}$, $\overline{\mu'}_{2,2}$ and
$\overline{\mu'}_{3,3}$ is greater than the 
 product of the maxima of the intervals 
$\overline{\mu'}_{2,1}$, $\overline{\mu'}_{3,2}$ and
$\overline{\mu'}_{1,3}$.
So we can conclude that  $\mu $ does not contain a rank-one matrix.
\begin{rem}
It is not true that, given a   $p \times q $ interval matrix,  
   $\mu =( [m_{i,j}, M_{i,j}])_{i,j}$,  with $m_{i,j} \leq M_{i,j}$
 for any  $i \in \{1,\dots,p\}$  and $ j \in \{1,\dots,q\}$, then 
  there exists 
 $A \in \mu$ with $rk(A)=1$ if and only if, for any $h \in [2 , 2^{\min\{p,q\}-1}] \cap \N$, for any
 $i_1,\dots, i_h \in \{1,\dots, p\}$, for any
  $j_1,\dots, j_h \in \{1,\dots, q\}$ and for any $\sigma \in \Sigma_h$, we have 
\begin{equation} \label{bo3}
\mu_{i_1, j_1 } \cdot \dots \cdot \mu_{i_h, j_h} \cap \mu_{i_1,j_{\sigma(1)}} \cdot \dots \cdot
 \mu_{i_h, j_{\sigma(h)}} \neq \emptyset.
 \end{equation}
 In fact, for instance the interval matrix 
 $$\left( \begin{array}{ccc} \left[-3, \frac{1}{3}\right] & [2,4] & 
 [0,1] \\ 
 1 & [-1 ,3 ] & 1
 \end{array}
 \right)
  $$ 
  satisfies condition (\ref{bo3}), but it is easy to see that it does not contain any rank-one matrix.
\end{rem}

\section{Maximal rank of matrices contained in an interval matrix}

\begin{defin}
Given a $p \times p $ interval  matrix, $\nu$, a {\bf partial generalized
diagonal} ({\bf pg-diagonal} for short) of length $k$ of $\nu$ is a $k$-uple of the kind $$
(\nu_{i_1, j_1},\dots, \nu_{i_k, j_k})$$ 
for some  $\{i_1, \dots i_k\}$ and $ \{j_1, \dots, j_k\} $ subsets of $ \{1,\dots ,p\}$.

Its {\bf complementary matrix} is defined to be the submatrix of $\mu$ given by the rows and columns whose indices are respectively in  $\{1,\ldots , p\}- \{i_1, \ldots , i_k\}$ and 
in  $\{1,\ldots , p\}- \{j_1, \ldots , j_k\}$. 

We say that a pg-diagonal is {\bf totally nonconstant} if and only if all its entries are not constant.

We define $det^c(\mu) $ as follows:
$$ det^c(\mu) = 0+ \sum_{\sigma \in \Sigma_p \; s.t. \; \mu_{1, \sigma(1)}, \dots, \mu_{p, \sigma(p)} \;  are \; constant} \epsilon (\sigma) \,
\mu_{1, \sigma(1)} \cdot \ldots  \cdot\mu_{p, \sigma(p)} ,$$ 
where $\epsilon (\sigma) $ is the sign of the permutation $\sigma$.

For every pg-diagonal of length $p$, say 
$
\mu_{1, \sigma(1)} \cdot \ldots  \cdot\mu_{p, \sigma(p)} $
for some $\sigma \in \Sigma_p$, 
 we call  $\epsilon (\sigma) $ also  the sign of the pg-diagonal.
\end{defin}

\begin{thm} \label{Mrkquadrate}
Let $\mu $ be a $p \times p $ interval matrix. Then $Mrk(\mu) < p$ if and only if the following conditions hold:

(1) in $\mu$ there is no totally nonconstant pg-diagonal of length $p$,

(2) the complementary matrix of every  totally nonconstant pg-diagonal of  length between $0$ and $p-1$ has $det^c $ equal to $0$.

\end{thm}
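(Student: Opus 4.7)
The approach is to view $\det$ as a polynomial on $\mu$ in the non-constant entries and to identify its monomials with the totally nonconstant pg-diagonals of $\mu$. First observe that $Mrk(\mu) < p$ if and only if $\det(A) = 0$ for every $A \in \mu$. Partition the positions of $\mu$ into the set $C$ of constant positions and the set $N$ of positions $(i,j)$ with $m_{i,j} < M_{i,j}$. For each $(i,j) \in N$ introduce an indeterminate $x_{i,j}$ ranging over the open interval $(m_{i,j}, M_{i,j})$; with the constants fixed at their values, $\det(A)$ becomes a polynomial $P$ in these $x_{i,j}$. Since $\mu$ projects onto an open box in the space of non-constant entries, $P$ vanishes on all of $\mu$ if and only if $P \equiv 0$ as a polynomial.

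Next, expand $\det(A) = \sum_{\sigma \in \Sigma_p} \epsilon(\sigma) \prod_i a_{i,\sigma(i)}$ and group the summands by the non-constant part of each $\sigma$. For a fixed $\sigma$, set $D(\sigma) = \{(i,\sigma(i)) : (i,\sigma(i)) \in N\}$; this is a totally nonconstant pg-diagonal of some length $k$, while the remaining positions $\{(i,\sigma(i)) : (i,\sigma(i)) \in C\}$ form a bijection $\tau$ between the row and column complements of $D$, lying inside the complementary matrix $D^c$ and using only constant entries. The correspondence $\sigma \leftrightarrow (D,\tau)$ is a bijection, and a Laplace-type sign computation (reordering rows and columns so that the indices $I_D$ and $J_D$ are separated from their complements) gives $\epsilon(\sigma) = \epsilon_D\, \epsilon(\tau)$ for some sign $\epsilon_D \in \{\pm 1\}$ depending only on $D$. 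Collecting monomials in the $x_{i,j}$ yields
$$P(x) = \sum_{D} \epsilon_D \, det^c(D^c) \prod_{(i,j) \in D} x_{i,j},$$
where $D$ ranges over the totally nonconstant pg-diagonals of $\mu$ (including the empty one, for which $D^c = \mu$).

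Because distinct totally nonconstant pg-diagonals $D$ correspond to distinct subsets of positions of $N$, they give rise to distinct monomials in the $x_{i,j}$, so no cancellation is possible between the contributions of different $D$'s. Hence $P \equiv 0$ if and only if $det^c(D^c) = 0$ for every totally nonconstant pg-diagonal $D$. For $D$ of length $p$ the complementary matrix $D^c$ is empty, so $det^c(D^c) = 1 \neq 0$; the vanishing of the corresponding coefficient thus forces the non-existence of such a $D$, which is condition (1). For $D$ of length $k$ with $0 \leq k \leq p-1$, the vanishing of the coefficient reads $det^c(D^c) = 0$, which is condition (2). The main technical nuisance I anticipate is the sign bookkeeping in the Laplace-type splitting of $\epsilon(\sigma)$, but since only the non-vanishing of each coefficient matters, the precise value of $\epsilon_D$ never enters the argument.
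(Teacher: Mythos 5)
Your proof is correct, but it reaches the conclusion by a genuinely different route from the paper, at least for the "only if" direction. The paper proves that direction by induction on $p$: assuming a violating pg-diagonal exists, it finds (by the inductive hypothesis) a submatrix realization $B$ with $\det(B)\neq 0$ and then perturbs a single nonconstant entry to produce $A\in\mu$ with $\det(A)\neq 0$; the base configuration $det^c(\mu)\neq 0$ is handled separately by the same regrouping of the Leibniz expansion that appears in the converse. You instead treat $\det$ as a multilinear polynomial $P$ in the nonconstant entries, use the fact that a polynomial vanishing on a nondegenerate box is identically zero, and read off the coefficient of the monomial attached to each totally nonconstant pg-diagonal $D$ as $\pm\, det^c(D^c)$ (with $det^c$ of the empty matrix equal to $1$, which is what converts the length-$p$ case into condition (1)). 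The key points — that the map $\sigma\mapsto(D(\sigma),\tau)$ is a bijection, that $\epsilon(\sigma)$ factors as $\epsilon_D\,\epsilon(\tau)$, and that distinct $D$'s yield distinct monomials so no cross-cancellation occurs — are all sound, and the sign $\epsilon_D$ indeed never needs to be computed. Your argument unifies both implications into a single coefficient identity and eliminates the induction, which makes it shorter and arguably more transparent; the paper's inductive construction is more elementary in that it never invokes the principle that a polynomial vanishing on an open set is zero, and its converse direction is essentially the same regrouping you perform, just organized by the number of nonconstant entries rather than by their exact position set.
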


\begin{proof}
$\Longrightarrow$ 
We prove the statement by induction on $p$. For $p=1 $ the statement is obvious. 
 Suppose $p \geq 2$ and that  the statement is true for $(p-1) \times (p-1) $ interval matrices. Let $\mu$ be a $(p\times p )$ interval matrix such that 
 $Mrk(\mu) <p$, that is $det(A)=0$ for every 
 $A \in \mu$. 
 
If $\mu$ contained a  totally nonconstant  pg-diagonal
of length $p$, say $\mu_{i_1,j_1}, \ldots
, \mu_{i_p, j_p}$, then  $\mu_{\hat{i_1}, \hat{j_1}}$ has obviously a totally nonconstant pg-diagonal of length $p-1$, so by induction assumption,  $Mrk(\mu_{\hat{i_1}, \hat{j_1}})=p-1$. Thus there exists $B \in 
\mu_{\hat{i_1}, \hat{j_1}} $ with $det(B) \neq 0$. So, for any choice of elements $x_{i_1, j}\in \mu_{i_1, j}$  for $j \neq j_1$ and
 $x_{i, j_1} \in \mu_{i, j_1}$  for $i \neq i_1$, 
we can find $x \in \mu_{i_1,j_1}$ such that the determinant of the matrix $X$ defined by $X_{\hat{i_1}, \hat{j_1}}=B$, $X_{i_1,j_1}=x$, $X_{i,j_1} = x_{i, j_1}$
for any $i \neq i_1$ and $X_{i_1,j} = x_{i_1, j}$
for any $j \neq j_1$  is nonzero, which is absurd. 
So we have proved that (1) holds. 

Moreover, by contradiction, 
suppose (2) does not hold. 
Thus in $\mu$ there exists a totally nonconstant pg-diagonal 
of length $k$ with $ 0 \leq k \leq p-1$  whose complementary matrix 
has $det^c$ nonzero.
If there exists such a diagonal with $k \geq 1$, say 
$\mu_{i_1,j_1}, \ldots
, \mu_{i_k, j_k}$, then also $ \mu_{\hat{i_1},\hat{j_1}}$ does not satisfy (2), so, by induction assumption,   there exists $B \in 
\mu_{\hat{i_1}, \hat{j_1}} $ with $det(B) \neq 0$ and we conclude as before.
On the other hand, suppose that $det^c(\mu) \neq 0$ 
but the complementary matrix of every 
totally nonconstant pg-diagonal 
of length $k$ with $ 1 \leq k \leq p-1$ 
has $det^c$ equal to zero; we call this assumption ($\ast$).

Let $A \in \mu$. 
By  (1), we can write $det (A)$ as the sum of: 

- the  sum (with sign) 
of the product of the entries of the  pg-diagonals of $A$ of length $p$ such the corresponding entries of $\mu$ are all constant, 

- the  sum (with sign) 
of the product of the entries of the  pg-diagonals of $A$  of length $p$ such all the corresponding entries of $\mu$  apart from one are constant, 

....

- the  sum (with sign) 
of the product of the entries of the  pg-diagonals of $A$  of length $p$ such all the corresponding entries of $\mu$  apart from $p-1$  are constant. 

The first sum coincides with  $det^c(\mu) $, so it is nonzero by the assumption ($\ast$); we can 
write the second sum by collecting the terms containing the same entry corresponding to the nonconstant  entry of 
$\mu$; so, by assumption ($\ast$), we get that 
this sum is  zero; we argue analogously for the other sums. So we can conclude that $det(A)$ is nonzero, which is absurd.

$\Longleftarrow $ Let $\mu$ be a  matrix 
satisfying (1) and (2) and let $A \in \mu$. 
By assumption (1), we can write $det (A)$ as the sum of: 

- the  sum (with sign) 
of the product of the entries of the  pg-diagonals of $A$ of length $p$ such the corresponding entries of $\mu$ are all constant, 

- the  sum (with sign) 
of the product of the entries of the  pg-diagonals of $A$  of length $p$ such all the corresponding entries of $\mu$  apart from one are constant, 

....

- the  sum (with sign) 
of the product of the entries of the  pg-diagonals of $A$  of length $p$ such all the corresponding entries of $\mu$  apart from $p-1$  are constant. 

The first sum is zero by assumption; we can 
write the second sum by collecting the terms containing the same entry corresponding to the nonconstant  entry of 
$\mu$; so by assumption we get that also 
this sum is  zero. We argue analogously for the other sums. 
\end{proof}

\begin{cor} \label{Mrk}
Let $\mu $ be an interval matrix. Then $Mrk(\mu)$
is the maximum of the natural numbers $t$ such that there is a $ t \times t $ submatrix  of
 $\mu$ either with a totally nonconstant pg-diagonal  of  length  $t$ or with a totally nonconstant
 pg-diagonal of  length between $0$ and $t-1$ whose complementary matrix has $det^c  \neq 0$.
 \end{cor}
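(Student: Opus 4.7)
The plan is to reduce the rectangular case directly to the square case treated in Theorem~\ref{Mrkquadrate}. The key classical fact I will invoke is that, for any real matrix $A$, the rank $rk(A)$ equals the largest $t$ for which $A$ has a $t\times t$ submatrix with nonzero determinant. Extending this to interval matrices, I will first establish that
\[
Mrk(\mu) \;=\; \max\bigl\{\,t\in\N\;\big|\;\text{some }t\times t\text{ submatrix }\nu\text{ of }\mu\text{ satisfies }Mrk(\nu)=t\,\bigr\}.
\]
The inequality $\geq$ is immediate: if $A\in\mu$ has rank $t$, then $A$ has a $t\times t$ submatrix $B$ with $det(B)\neq 0$, and $B$ lies in the corresponding $t\times t$ submatrix $\nu$ of $\mu$, so $Mrk(\nu)=t$. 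Conversely, if some $t\times t$ submatrix $\nu$ of $\mu$ contains a matrix $B$ with $det(B)\neq 0$, then any $A\in\mu$ whose restriction to the chosen rows and columns is $B$ has $rk(A)\geq t$, giving $Mrk(\mu)\geq t$.

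Once this reduction is in place, I apply Theorem~\ref{Mrkquadrate} to each square submatrix $\nu$ of size $t$: since $Mrk(\nu)\leq t$ trivially, the condition $Mrk(\nu)=t$ is equivalent to the negation of the two conditions (1) and (2) of the theorem, i.e.\ either $\nu$ admits a totally nonconstant pg-diagonal of length $t$, or $\nu$ admits a totally nonconstant pg-diagonal of length $k$ with $0\leq k\leq t-1$ whose complementary matrix has $det^c\neq 0$. Substituting this characterization into the displayed maximum above yields precisely the statement of the corollary.

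The only step that requires any care, rather than being a formal rewrite, is the equivalence $Mrk(\mu)=\max\{t:\exists\,\nu\subseteq\mu\text{ with }Mrk(\nu)=t\}$; both directions rest on the fact that one can freely fix the entries of $A\in\mu$ outside the chosen $t\times t$ submatrix without affecting whether a $t\times t$ minor is nonzero. I do not foresee a genuine obstacle here — the corollary is essentially a packaging of Theorem~\ref{Mrkquadrate} via the minor characterization of rank.
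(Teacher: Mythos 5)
Your proof is correct and follows essentially the same route as the paper: both establish that $Mrk(\mu)$ equals the largest $t$ for which some $t\times t$ submatrix $\nu$ of $\mu$ satisfies $Mrk(\nu)=t$ (using the minor characterization of rank in one direction and free completion of the remaining entries in the other), and then invoke Theorem~\ref{Mrkquadrate} to translate $Mrk(\nu)=t$ into the pg-diagonal conditions. The only blemish is that you label as ``$\geq$'' the argument that actually proves $Mrk(\mu)\leq\max\{t:\dots\}$ and vice versa, but both directions are present and sound.
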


\begin{proof}
Let $\overline{t}$ be  the maximum of the natural numbers $t$ such that there is a $ t \times t $ submatrix  of
 $\mu$ with $Mrk$ equal to $t$ and
 let $\tilde{t}= Mrk(\mu)$.
 
  Obviously  $ \tilde{t} \geq \overline{t}$.
 On the other hand, let $A \in \mu $ such that $rk(A)= \tilde{t}$. Then there exists a $ \tilde{t} \times \tilde{t} $ submatrix of $A$ with nonzero determinant. Hence the corresponding submatrix 
 of $\mu$ is a  $ \tilde{t} \times \tilde{t} $ submatrix with $Mrk$ equal to $\tilde{t}$,
 so $ \overline{t} \geq \tilde{t}$. By Theorem \ref{Mrkquadrate} we can conclude.
\end{proof}

\section{The case of the matrices with three columns}

Let  
   $\mu =( [m_{i,j}, M_{i,j}])_{i,j}$,  with $m_{i,j} \leq M_{i,j}$
 for any  $i \in \{1,\dots,p\}$  and $ j \in \{1,2,3\}$, be an interval matrix. By Remarks \ref{mrk0},  \ref{po}, \ref{tronco}   and Theorem \ref{casopositivo} we can see easily whether $mrk(\mu) \in 
 \{0,1\}$ and by Corollary \ref{Mrk} we can calculate $Mrk(\mu) $. So if $mrk(\mu) \in 
 \{0,1\}$ we can calculate $rkRange(\mu)$. Moreover,
obviously, if $mrk(\mu) \not\in \{0,1\}$ and $Mrk(\mu) =2$,
we must have  $mrk(\mu)=2 $ and we  get that 
 every matrix contained in $\mu$ has rank $2$.
 So at the moment, we do not know 
  $rkRange(\mu)$
only if  $mrk(\mu) \not\in \{0,1\}$ and $Mrk(\mu) =3$. In this case, $mrk(\mu)$ can 
be only $2$ or $3$. 
In this section we give a criterion to  establish if,
 in this case, $mrk(\mu)$ is equal to $2 $ or to $3$.
 
\begin{lem} \label{imp}
(a) Let $a_i , c_j  \in \R_{\geq 0}$ and 
$ b_i , z_i  , d_j, u_j  \in \R$ for $i=1,\dots,k$ and
$j=1,\dots,h$ such that 
 and $a_{\overline{i}} $ and $c_{\overline{j}}$ are nonzero 
 for some $\overline{i} \in \{1,\dots,k\}$ and 
 $ \overline{j}  \in \{1,\dots, h\}$.
Then there exist $\lambda, \gamma \in \R_{\geq 0}$ such that, for every $i$ and $j$, 
 $$ \left\{ \begin{array}{l} 
 \lambda \, a_i + \gamma \, b_i \leq z_i ,\\
 \lambda \,  c_j + \gamma \, d_j \geq u_j
 \end{array}
  \right.$$ 
if and only if
there exist $\lambda, \gamma \in \R_{\geq 0}$ such that, for every $i$ and $j$, 
 $$ \left\{ \begin{array}{l} 
 \lambda  \, a_i  c_j + \gamma \, b_i c_j \leq z_i c_j ,\\
 \lambda \,  c_j  a_i+ \gamma\,  d_j a_i \geq u_j a_i .
 \end{array}
  \right.$$ 
(b)  Let $a_i  \in \R_{\geq 0}$  and $ 
 b_i , c_i , z_i , u_i \in \R$ for $i=1,\dots,k$; suppose 
  there exists $\overline{i}$ such that $a_{\overline{i}} \neq 0$.
  
Then there exist $\lambda, \gamma \in \R_{\geq 0}$ such that, for every $i$, 
 \begin{equation} \label{eqb}
  \left\{ \begin{array}{l} 
 \lambda \, a_i + \gamma \, b_i \leq z_i ,\\
 \lambda \,  a_i + \gamma \, c_i \geq u_i 
 \end{array}
  \right.
  \end{equation}
if and only if
there exists $ \gamma \in \R_{\geq 0}$ 
such that, for every $i,j \in \{1,\dots, k\}$, 
 $$a_j(  z_i - \gamma \, b_i )\geq  a_i (u_j - \gamma \, c_j ) $$ 
and $$ z_i - \gamma \, b_i \geq 0.$$
(c) Let $x_i, y_i \in \R$ for $i=1,\dots,k$. Then there exists $\gamma \in \R_{\geq 0}$  such that, for every $i$,
\begin{equation} \label{eqc}
 \gamma \, x_i \geq y_i
 \end{equation}
  if and only if, for every $i$,
$x_i \leq 0$ implies $y_i \leq 0$ and 
$$ y_j x_i \geq y_i x_j $$ for every $i,j$ such that 
$x_j > 0$ and $x_i < 0$. 
\end{lem}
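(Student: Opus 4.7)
The lemma packages three Farkas-type feasibility criteria for small systems of linear inequalities in one or two nonnegative unknowns. I would prove them in the order (a), (c), (b), since (a) is independent and (b) will be reduced to (c).

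Part (a) is a pure scaling argument. The forward direction is immediate: multiplying the $i$-th inequality $\lambda a_i + \gamma b_i \leq z_i$ by $c_j \geq 0$ and the $j$-th inequality $\lambda c_j + \gamma d_j \geq u_j$ by $a_i \geq 0$ produces the second system with the same $\lambda, \gamma$. For the backward direction I use the indices $\overline{i}, \overline{j}$ with $a_{\overline{i}}, c_{\overline{j}} > 0$: specializing the second system at $j = \overline{j}$ (respectively $i = \overline{i}$) and dividing by $c_{\overline{j}} > 0$ (respectively $a_{\overline{i}} > 0$) recovers the first system.

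Part (c) I prove by a case analysis on the sign of $x_i$. A feasible $\gamma \geq 0$ forces $\gamma \geq y_i / x_i$ when $x_i > 0$, $\gamma \leq y_i / x_i$ when $x_i < 0$, and $y_i \leq 0$ when $x_i = 0$; combining the first two bounds over any pair with $x_j > 0$, $x_i < 0$ rearranges to $y_j x_i \geq y_i x_j$, which proves the forward direction. Conversely, I set $\gamma := \max\{0,\, \sup_{x_i > 0} y_i / x_i\}$, with the convention that a sup over the empty set is $-\infty$. The hypothesis $x_i \leq 0 \Rightarrow y_i \leq 0$ combined with the mixed-sign inequality $y_j x_i \geq y_i x_j$ shows $\gamma \leq y_i / x_i$ whenever $x_i < 0$, and a routine check of the three sign cases verifies $\gamma x_i \geq y_i$ for every $i$.

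Part (b) I reduce to (c) by fixing $\gamma \geq 0$ and viewing the remaining system in $\lambda$ as an instance of (c) over a doubled index set. I rewrite $\lambda a_i \leq z_i - \gamma b_i$ as $\lambda \cdot (-a_i) \geq \gamma b_i - z_i$ and keep $\lambda a_i \geq u_i - \gamma c_i$ as it stands. Applying (c) to this combined family, the clause ``$x \leq 0$ implies $y \leq 0$'' on the first block (whose $x$-value is $-a_i \leq 0$) yields $z_i - \gamma b_i \geq 0$ for every $i$, while the mixed-sign inequality yields $a_j(z_i - \gamma b_i) \geq a_i(u_j - \gamma c_j)$ whenever $a_i, a_j > 0$. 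The cases with $a_i = 0$ or $a_j = 0$ in the universally quantified form stated in (b) collapse to conditions already implied by $z_i - \gamma b_i \geq 0$ and by specializing the inequality at the index $\overline{i}$ with $a_{\overline{i}} > 0$, so the cleaner form in the statement is equivalent to the pointwise (c)-derived conditions. The main obstacle is not a deep step but careful bookkeeping in (b): re-encoding the two families of $\lambda$-inequalities as a single $\gamma x \geq y$ system so that (c) applies, and verifying that the edge cases where some $a_i$ vanishes are still correctly captured thanks to the existence of $\overline{i}$ with $a_{\overline{i}} > 0$.
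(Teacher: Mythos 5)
Your proposal is correct and follows essentially the same route as the paper: (a) by scaling and specializing at $\overline{i},\overline{j}$, (c) by a sign case analysis on $x_i$, and (b) by eliminating $\lambda$ and comparing the resulting upper and lower bounds, with the $a_i=0$ edge cases absorbed into the universally quantified inequalities exactly as the paper does. The only cosmetic difference is that you formally route (b) through (c) via the re-encoding $\lambda(-a_i)\geq \gamma b_i - z_i$, whereas the paper performs the same Fourier--Motzkin elimination of $\lambda$ directly.
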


\begin{proof}
Part (a) is obvious.
Let us prove (b).
There exist $\lambda, \gamma \in \R_{\geq 0}$ such that, for every $i$, 
  (\ref{eqb}) holds if and only if 
there exist $\lambda, \gamma \in \R_{\geq 0}$ such that
$$  \left\{ \begin{array}{l} 
0 \leq z_i - \gamma\, b_i \hspace*{1cm} \forall i 
\;\; \mbox{s.t. } a_i=0,\\
0 \geq u_i - \gamma \, c_i \hspace*{1cm} \forall i 
\;\; \mbox{s.t. } a_i=0,\\
 \lambda  \leq (z_i -\gamma \, b_i )/a_i \hspace*{1cm} \forall i 
\;\; \mbox{s.t. } a_i \neq 0, \\
 \lambda  \geq (u_i -\gamma \, c_i )/a_i \hspace*{1cm} \forall i \;\; \mbox{s.t. } a_i \neq 0, 
 \end{array}
  \right.$$
  and this is equivalent to the existence of $
\gamma \in \R_{\geq 0}$ such that
$$  \left\{ \begin{array}{ll} 
0 \leq z_i - \gamma\, b_i & \forall i 
\;\; \mbox{s.t. } a_i=0,\\
0 \geq u_i - \gamma \, c_i & \forall i 
\;\; \mbox{s.t. } a_i=0,\\
 (u_j -\gamma \, c_j )/a_j  \leq (z_i -\gamma \, b_i )/a_i & \forall i,j 
\;\; \mbox{s.t. } a_i a_j\neq 0, \\
z_i -\gamma \, b_i  \geq 0 & \forall i \;\; \mbox{s.t. } a_i \neq 0 ,
 \end{array}
  \right.$$
and we can easily prove that this is equivalent to 
the existence of  $ \gamma \in \R_{\geq 0}$ 
such that, for every $i,j$, we have that 
 $a_j(  z_i - \gamma \, b_i )\geq  a_i (u_j - \gamma \, c_j ) $ 
and $ z_i - \gamma \, b_i \geq 0$.

Let us prove (c). There exists $\gamma \in \R_{\geq 0}$  such that, for every $i$,
 (\ref{eqc}) holds if and only if 
 there exists $\gamma \in \R_{\geq 0}$  such that
 $$
   \left\{ \begin{array}{ll} 
\gamma  \geq y_i/x_i & \forall i 
\;\; \mbox{s.t. } x_i > 0,\\
\gamma  \leq y_i/x_i & \forall i 
\;\; \mbox{s.t. } x_i < 0,\\
y_i \leq 0  & \forall i 
\;\; \mbox{s.t. } x_i = 0,
 \end{array}
  \right.$$
  and this is equivalent to the following condition:
   for every $i$,
$x_i \leq 0$ implies $y_i \leq 0$ and 
$y_j x_i \geq y_i x_j $ for every $i,j$ such that 
$x_j > 0$ and $x_i < 0$. 
\end{proof}

\begin{cor} \label{corimp}
 Let $a_i , c_j  \in \R_{\geq 0}$ and 
$ b_i , z_i  , d_j, u_j  \in \R$ for $i, j \in \{1,\dots,k\}$  such that 
 and $a_{\overline{i}} $ and $c_{\overline{j}}$ are nonzero  for some $\overline{i} , \overline{j}\in \{1,\dots,k\}$.
Then there exist $\lambda, \gamma \in \R_{\geq 0}$ such that, for every $i$ and $j$, 
 \begin{equation} \label{eqcor} \left\{ \begin{array}{l} 
 \lambda \, a_i + \gamma \, b_i \leq z_i, \\
 \lambda \,  c_j + \gamma \, d_j \geq u_j  
 \end{array}
  \right.
  \end{equation}
if and only if,
for any $i,j,r,s $:

$	\bullet$ $b_i \geq 0 $ implies $z_i \geq 0$, 

$\bullet$ $b_i >0 $ and $b_j < 0 $
imply $ b_i z_j \geq b_j z_i$,

$\bullet$  $a_i   d_r -   b_i  c_r \leq 0 $ implies   $a_i  u_r   - c_r   z_i  \leq 0 $,

$ \bullet $ $a_i  d_r -    b_i  c_r <0 $
and $a_{j}   d_{s} -    b_{j}  c_{s} >0 $
imply $$(a_i   d_r -    b_i c_r)
(a_{j}   u_{s}    -  c_{s}   z_{j} )
 \geq
(a_{j}   d_{s} -    b_{j}  c_{s})
(a_i   u_r    -  c_r   z_i ) ,$$

$ \bullet $ $b_{j}< 0 $ and   $a_i   d_r -    b_i  c_r <0 $ imply 
$z_{j} (a_i    d_r -    b_i  c_r ) 
\leq b_{j}(a_i   u_r    -  c_r   z_i ) $,

$ \bullet $ $b_{j} > 0 $ and   $a_i    d_r -    b_i  c_r > 0 $ imply 
$z_{j} (a_i    d_r -    b_i  c_r ) 
\geq b_{j}(a_i  u_r    -  c_r   z_i ) $.
\end{cor}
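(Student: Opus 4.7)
The plan is to derive the six bulleted conditions by chaining together the three parts of Lemma \ref{imp}. The target system
\begin{equation*}
\lambda a_i + \gamma b_i \leq z_i, \qquad \lambda c_j + \gamma d_j \geq u_j,
\end{equation*}
has different coefficients $a_i$ and $c_j$ in front of $\lambda$, so Lemma \ref{imp}(b) cannot be applied directly. I would therefore first invoke Lemma \ref{imp}(a) to pass to the equivalent system, now indexed by pairs $(i,j)$,
\begin{equation*}
\lambda (a_i c_j) + \gamma (b_i c_j) \leq z_i c_j, \qquad \lambda (a_i c_j) + \gamma (a_i d_j) \geq a_i u_j,
\end{equation*}
in which both families share the nonnegative $\lambda$-coefficient $a_i c_j$, with $a_{\overline{i}} c_{\overline{j}} > 0$ by hypothesis.

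Treating each pair $(i,j)$ as a single index, I would next apply Lemma \ref{imp}(b) to eliminate $\lambda$. A direct computation shows that the inequalities produced by (b) all carry the common factor $a_{i'} c_j$, which may be cleared: when $a_{i'} c_j = 0$ the inequality is automatic, and when $a_{i'} c_j > 0$ (for instance at $i' = \overline{i}$, $j = \overline{j}$) the surviving constraints collapse to the two families
\begin{equation*}
(\mathrm{C1})\ \gamma(-b_i) \geq -z_i \text{ for all } i, \qquad (\mathrm{C2})\ \gamma(a_i d_r - b_i c_r) \geq a_i u_r - c_r z_i \text{ for all } i,r.
\end{equation*}

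Finally, I would apply Lemma \ref{imp}(c) to the combined system $(\mathrm{C1}) \cup (\mathrm{C2})$, viewed as a family of inequalities of the form $\gamma x \geq y$. The sign-preservation half of (c) applied to (C1) gives the first bullet, and applied to (C2) gives the third. The compatibility half of (c) then splits into four subcases according to which constraints are paired: two (C1)-constraints of opposite sign yield the second bullet (after expanding the products $(-b_i)(-z_j)$); two (C2)-constraints yield the fourth; and the two mixed pairings, distinguished by whether the positive $x$ comes from the (C1)- or the (C2)-side, yield the fifth and sixth bullets respectively.

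The main obstacle is careful bookkeeping. One has to check that the inequalities emerging from step (b) with $a_{i'} c_j = 0$ really are automatic, so that (C1)--(C2) are genuinely equivalent---not merely necessary---for the post-(b) system; and the sign flips incurred when rewriting $\gamma(-b_i) \geq -z_i$ as a constraint of the form $\gamma x \geq y$ must be tracked so that the compatibility inequalities emerge in the exact asymmetric forms $b_i z_j \geq b_j z_i$ and $z_j(a_i d_r - b_i c_r) \lessgtr b_j(a_i u_r - c_r z_i)$ recorded in the fifth and sixth bullets. Once these conventions are pinned down, the remainder is a systematic case analysis.
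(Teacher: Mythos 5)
Your proposal is correct and follows essentially the same route as the paper's own proof: apply Lemma \ref{imp}(a) to equalize the $\lambda$-coefficients to $a_i c_j$, use Lemma \ref{imp}(b) to eliminate $\lambda$ and clear the nonnegative common factors (using $a_{\overline{i}}c_{\overline{j}}>0$) down to the two families $\gamma(-b_i)\geq -z_i$ and $\gamma(a_i d_r - b_i c_r)\geq a_i u_r - c_r z_i$, and then read off the six bullets from Lemma \ref{imp}(c). Your case split for the final step (sign-preservation on each family, plus the three pairings for the compatibility inequalities) matches the paper's exactly.
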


\begin{proof}
By Remark \ref{imp} (a),  there exist $\lambda, \gamma \in \R_{\geq 0}$ such that, for every $i$ and $j$, (\ref{eqcor}) holds  if and only if 
there exist $\lambda, \gamma \in \R_{\geq 0}$ such that, for every $i$ and $j$, 
 $$ \left\{ \begin{array}{l} 
 \lambda  \, a_i  c_j + \gamma \, b_i c_j \leq z_i c_j ,\\
 \lambda \,  c_j  a_i+ \gamma\,  d_j a_i \geq u_j a_i .
 \end{array}
  \right.$$ 
By Remark \ref{imp} (b), this is true if and only if 
there exists $\gamma \in \R_{\geq 0}$ such that, for every $i,j,r,s$, 
$$ \left\{ \begin{array}{l} 
   \gamma \, b_i c_j \leq  z_i  c_j ,\\
  a_s c_r  (  z_i  c_j - \gamma \, b_i c_j )   \geq 
  a_i c_j (u_r a_s - \gamma \,d_r a_s) , 
 \end{array}
  \right.$$ 
  which is obviously true if and only if 
there exists $\gamma \in \R_{\geq 0}$ such that, for every $i,r$, 
$$ \left\{ \begin{array}{l} 
   \gamma \,(- b_i)  \geq -z_i, \\
 \gamma \,(  a_i    d_r-   b_i  c_r   )    \geq 
  a_i  u_r    -  c_r   z_i .
 \end{array}
  \right.$$ 
By Remark \ref{imp} (c), this is true if and only if the following conditions hold for any $i,j,r, s, $:

$	\bullet$ $b_i \geq 0 $ implies $z_i \geq 0$, 

$\bullet$ $b_i >0 $ and $b_j < 0 $
imply $ b_i z_j \geq b_j z_i$,

$\bullet$  $a_i   d_r -   b_i  c_r \leq 0 $ implies   $a_i  u_r   - c_r   z_i  \leq 0 $,

$ \bullet $ $a_i  d_r -    b_i  c_r <0 $
and $a_{j}   d_{s} -    b_{j}  c_{s} >0 $
imply $$(a_i   d_r -    b_i c_r)
(a_{j}   u_{s}    -  c_{s}   z_{j} )
 \geq
(a_{j}   d_{s} -    b_{j}  c_{s})
(a_i   u_r    -  c_r   z_i ), $$

$ \bullet $ $b_{j}< 0 $ and   $a_i   d_r -    b_i  c_r <0 $ imply 
$z_{j} (a_i    d_r -    b_i  c_r ) 
\leq b_{j}(a_i   u_r    -  c_r   z_i ) $,

$ \bullet $ $b_{j} > 0 $ and   $a_i    d_r -    b_i  c_r > 0 $ imply 
$z_{j} (a_i    d_r -    b_i  c_r ) 
\geq b_{j}(a_i  u_r    -  c_r   z_i ) $.
\end{proof}

\begin{thm}
Let  
   $\mu =( [m_{i,j}, M_{i,j}])_{i,j}$,  with $m_{i,j} \leq M_{i,j}$
 for any  $i \in \{1,\dots,p\}$  and $ j \in \{1,2,3\}$, be a reduced  interval matrix. Suppose $m_{i,1} \geq  0$ for every $i \in \{1,\dots,p\}$. 
  Then $mrk(\mu) $ is less than or equal to $2$ if and only if,
 for $(v,w) =(2,3)$ or for  $(v,w) =(3,2)$, we have that at least one of (1),(2),(3),(4) holds:

(1) 
for any $i,j,r, s \in \{1,\dots,p\} $, we have:

$	\bullet$ $m_{i,v} \geq 0 $ implies $M_{i,w} \geq 0$, 

$\bullet$ $m_{i,v} >0 $ and $m_{j,v} < 0 $
imply $ m_{i,v} M_{j,w} \geq m_{j,v} M_{i,w}$,

$\bullet$  $m_{i,1}   M_{r,v} -   m_{i,v}  M_{r,1} \leq 0 $ implies   $m_{i,1}  m_{r,w}   - M_{r,1}   M_{i,w}  \leq 0 $,

$ \bullet $ $m_{i,1}  M_{r,v} -    m_{i,v}  M_{r,1} <0 $
and $m_{j,1}   M_{s,v} -    m_{j,v}  M_{s,1} >0 $
imply $$(m_{i,1}   M_{r,v} -    m_{i,v} M_{r,1})
(m_{j,1}   u_{s}    -  M_{s,1}   M_{j,w} )
 \geq
(m_{j,1}   M_{s,v} -    m_{j,v}  M_{s,1})
(m_{i,1}   m_{r,w}    -  M_{r,1}   M_{i,w} ) ,$$

$ \bullet $ $m_{j,v}< 0 $ and   $m_{i,1}   M_{r,v} -    m_{i,v}  M_{r,1} <0 $ imply 
$$M_{j,w} (m_{i,1}    M_{r,v} -    m_{i,v}  M_{r,1} ) 
\leq m_{j,v}(m_{i,1}   m_{r,w}    -  M_{r,1}   M_{i,w} ) ,$$

$ \bullet $ $m_{j,v} > 0 $ and   $m_{i,1}    M_{r,v} -    m_{i,v}  M_{r,1} > 0 $ imply 
$$M_{j,w} (m_{i,1}    M_{r,v} -    m_{i,v}  M_{r,1} ) 
\geq m_{j,v}(m_{i,1}  m_{r,w}    -  M_{r,1}   M_{i,w} ).$$

(2) the same conditions as in (1)
with $-M_{\cdot,v }$ instead of $m_{\cdot, v}$  and vice versa hold;

(3)  
 the same conditions as in (1)
with $-M_{\cdot,1 }$ instead of $m_{\cdot, 1}$  and vice versa hold;

(4)   the same conditions as in (1) 
with $-M_{\cdot,1 }$ instead of $m_{\cdot, 1}$ and vice versa and
with $-M_{\cdot,v }$ instead of $m_{\cdot, v}$  and vice versa hold.
\end{thm}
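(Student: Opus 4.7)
The proof will run in three stages. First, one reduces to a concrete linear-algebra problem: since $\mu$ is reduced with $m_{i,1} \geq 0$, there must be some $\bar i$ with $m_{\bar i,1} > 0$, so the first column of every $A \in \mu$ is nonzero. Consequently $rk(A) \leq 2$ iff the three columns of $A$ are linearly dependent iff (using $A^{(1)} \neq 0$) there exist $(v,w) \in \{(2,3),(3,2)\}$ and $\lambda, \gamma \in \R$ with $A^{(w)} = \lambda A^{(1)} + \gamma A^{(v)}$. Thus $mrk(\mu) \leq 2$ is equivalent to the existence of some choice of $(v,w)$ and $(\lambda, \gamma) \in \R^2$ such that for every $i$,
\[
[m_{i,w}, M_{i,w}] \cap \bigl(\lambda[m_{i,1},M_{i,1}] + \gamma[m_{i,v},M_{i,v}]\bigr) \neq \emptyset.
\]

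Second, I would split the search for $(\lambda, \gamma)$ into the four closed quadrants of $\R^2$. In each quadrant the endpoints of the interval $\lambda[m_{i,1},M_{i,1}] + \gamma[m_{i,v},M_{i,v}]$ are explicit (selected from $\{m,M\}$ by fixed rules), so the nonempty-intersection condition becomes a pair of linear inequalities in $(\lambda, \gamma)$ per row $i$. In the first quadrant $\lambda, \gamma \geq 0$, this is exactly the system
\[
\lambda\, m_{i,1} + \gamma\, m_{i,v} \leq M_{i,w}, \qquad \lambda\, M_{j,1} + \gamma\, M_{j,v} \geq m_{j,w},
\]
which matches the template of Corollary \ref{corimp} with $a_i = m_{i,1}$, $b_i = m_{i,v}$, $z_i = M_{i,w}$, $c_j = M_{j,1}$, $d_j = M_{j,v}$, $u_j = m_{j,w}$. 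The nonnegativity $a_i, c_j \geq 0$ follows from $m_{\cdot,1} \geq 0$, and reducedness supplies the nondegenerate $\bar i, \bar j$, so Corollary \ref{corimp} applies and yields condition (1) verbatim. The quadrant $\lambda \geq 0, \gamma \leq 0$ reduces to the first via $\gamma \mapsto -\gamma'$, which exchanges $(m_{\cdot,v}, M_{\cdot,v}) \leftrightarrow (-M_{\cdot,v}, -m_{\cdot,v})$ in the system; this produces condition (2).

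The third and harder stage handles the two quadrants with $\lambda \leq 0$ by the analogous substitution $\lambda \mapsto -\lambda'$, which formally replaces $m_{\cdot,1}, M_{\cdot,1}$ by $-M_{\cdot,1}, -m_{\cdot,1}$ in the system. The main obstacle is that the resulting coefficients $a_i = -M_{i,1}$ and $c_j = -m_{j,1}$ are nonpositive, so Corollary \ref{corimp} cannot be invoked directly: its derivation through Lemma \ref{imp}(a) multiplied the two inequalities by $c_j$ and $a_i$, and the direction of those multiplications depends on the sign. I expect to resolve this by re-running the Fourier--Motzkin elimination underlying Lemma \ref{imp}(b), (c) on the substituted system, carrying the sign changes carefully through the case splits on vanishing/positivity of $a_i, c_j$. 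This produces the same algebraic inequalities as the formal substitution $m_{\cdot,1} \leftrightarrow -M_{\cdot,1}$ inside condition (1), which are precisely conditions (3) and (4); combining with (2) yields (4) via a simultaneous substitution on the $v$-column. Finally one records that the overall criterion $mrk(\mu) \leq 2$ is the disjunction over $(v,w) \in \{(2,3),(3,2)\}$ and over the four quadrants, which is the theorem as stated.
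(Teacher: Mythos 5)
Your first two stages reproduce the paper's argument essentially verbatim: the reduction to the existence of $A\in\mu$ with $A^{(w)}\in\langle A^{(1)},A^{(v)}\rangle$ (using reducedness to exclude $c_2=c_3=0$), the split of $(\lambda,\gamma)$ into the four closed quadrants with the endpoints of $\lambda[m_{i,1},M_{i,1}]+\gamma[m_{i,v},M_{i,v}]$ written out according to the signs, and the application of Corollary \ref{corimp} in the quadrant $\lambda,\gamma\geq 0$ and, via $\gamma\mapsto-\gamma$, in the quadrant $\lambda\geq 0,\gamma\leq 0$. Up to that point the proposal is correct and coincides with the paper's proof.

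The problem is your third stage, and it is worth saying that you have put your finger on the genuinely delicate point: the paper simply invokes Corollary \ref{corimp} for the substituted systems of cases (3) and (4) without comment, even though there $a_i=-M_{i,1}$ and $c_j=-m_{j,1}$ are nonpositive and the corollary's hypothesis fails. But ``I expect to resolve this by re-running the Fourier--Motzkin elimination'' is not a proof, and the outcome you announce --- that the elimination returns exactly the inequalities obtained by the formal substitution $m_{\cdot,1}\leftrightarrow -M_{\cdot,1}$ in condition (1) --- is not what the elimination gives. When the coefficient of $\lambda$ is nonpositive, the two families of inequalities exchange their roles as upper and lower bounds for $\lambda$, so the surviving constraints change in kind, not merely in the names of the symbols. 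Concretely, in the quadrant $\lambda,\gamma\geq 0$ the elimination forces $z_i-\gamma b_i\geq 0$ for every $i$, which via Lemma \ref{imp}(c) produces the first two bullets of (1) (``$m_{i,v}\geq 0$ implies $M_{i,w}\geq 0$'', etc.); these bullets involve no column-$1$ data, so the formal substitution leaves them unchanged in (3). A direct elimination in the quadrant $\lambda\leq 0,\gamma\geq 0$ instead forces $u_j-\gamma d_j\leq 0$ for every $j$, which yields ``$M_{j,v}\leq 0$ implies $m_{j,w}\leq 0$'' --- a different condition. So either the elimination must be redone from scratch for the $\lambda\leq 0$ quadrants (for instance by negating both families so that the coefficient of the eliminated variable is again nonnegative and Corollary \ref{corimp} genuinely applies) and the result compared bullet by bullet with (3) and (4), or the statement itself must be adjusted; as written, this stage of your proposal is a genuine gap, one that the paper's own proof shares.
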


\begin{proof}
Obviously   $mrk(\mu) \leq 2 $  if and only if there exists $A 
\in \mu $  and $c_1, c_2, c_3$ not all zero
such that $c_1 A^{(1)} + c_2 A^{(2)} + c_3 A^{(3)} =0$.
Since $\mu $ is reduced, it is not possible that $c_2=c_3 =0$. Hence  $mrk(\mu) \leq2 $  if and only if 
there exists $A \in \mu $ such that 
either $A^{(3)} \in  \langle A^{(1)}, A^{(2)}\rangle $ or $A^{(2)} \in  \langle A^{(1)}, A^{(3)} \rangle $. 

Clearly there exists $A \in \mu $ such that 
$A^{(w)} \in  \langle A^{(1)}, A^{(v)}\rangle $
(with $(v,w) \in\{(2,3),(3,2)\}$)
if and only if at least one of the following holds: 

(1) there exist $\lambda, \gamma \in \R_{\geq 0}$ such that, for any $i,j \in \{1,\dots,p\}$, 
$$ \left\{\begin{array}{l} 
\lambda \, m_{i,1} + \gamma \, m_{i,v} \leq M_{i,w},\\
\lambda \, M_{j,1} + \gamma \, M_{j,v} \geq m_{j,w} , 
\end{array} \right.$$

(2) there exist $\lambda \in \R_{\geq 0}$,
$\gamma \in \R_{\leq 0} $
 such that, for any $i,j \in \{1,\dots,p\}$, 
$$ \left\{\begin{array}{l} 
\lambda \, m_{i,1} + \gamma \, M_{i,v} \leq M_{i,w},\\
\lambda \, M_{j,1} + \gamma \, m_{j,v} \geq m_{j,w},  
\end{array} \right.$$

(3) there exist  $\lambda \in \R_{\leq 0}$,
$\gamma \in \R_{\geq 0} $
 such that, for any $i,j \in \{1,\dots,p\}$, 
$$ \left\{\begin{array}{l} 
\lambda \, M_{i,1} + \gamma \, m_{i,v} \leq M_{i,w},\\
\lambda \, m_{j,1} + \gamma \, M_{j,v} \geq m_{j,w},  
\end{array} \right.$$ 

(4) there exist $\lambda, \gamma \in \R_{\leq 0}$ such that, for any $i,j \in \{1,\dots,p\}$, 
$$ \left\{\begin{array}{l} 
\lambda \, M_{i,1} + \gamma \, M_{i,v} \leq M_{i,w},\\
\lambda \, m_{j,1} + \gamma \, m_{j,v} \geq m_{j,w}.
\end{array} \right.$$ 

Clearly condition (2) is equivalent to 
the existence of  $\lambda, \gamma \in \R_{\geq 0}$
 such that, for any $i,j \in \{1,\dots,p\}$, 
$$ \left\{\begin{array}{l} 
\lambda \, m_{i,1} + \gamma \, (-M_{i,v}) \leq M_{i,w},\\
\lambda \, M_{j,1} + \gamma \,(- m_{j,v}) \geq m_{j,w}.  
\end{array} \right.$$ 
Condition (3) is equivalent to 
the existence of  $\lambda, \gamma \in \R_{\geq 0}$
 such that, for any $i,j \in \{1,\dots,p\}$, 
$$ \left\{\begin{array}{l} 
\lambda \, (-M_{i,1}) + \gamma \, m_{i,v} \leq M_{i,w},\\
\lambda \,(- m_{j,1}) + \gamma \, M_{j,v} \geq m_{j,w}.  
\end{array} \right.$$ 
Finally, 
condition (4) is equivalent to 
the existence of  $\lambda, \gamma \in \R_{\geq 0}$
such that, for any $i,j \in \{1,\dots,p\}$, 
$$ \left\{\begin{array}{l} 
\lambda \,( -M_{i,1}) + \gamma \,(- M_{i,v}) \leq M_{i,w},\\
\lambda \,(- m_{j,1}) + \gamma \,(- m_{j,v}) \geq m_{j,w} . 
\end{array} \right.$$ 
So, by Corollary \ref{corimp}, for $i=1,2,3,4$, condition (i) is equivalent to condition (i) in the statement of the theorem.

\end{proof}

\bigskip

The website http://web.math.unifi.it/users/rubei/ contains the following 
programs (in octave):

(a)  rk01.m: for any interval matrix $\mu$, it 
says if $mrk(\mu) $ is $0$, $1$ or greater than $1$;

(b) mrkpx3.m: for any interval matrix $\mu$ 
with $3$ columns, it calculates $mrk(\mu) $;

(c) Mrk.m: for any interval matrix $\mu$, it calculates $Mrk(\mu) $.

The input $\mu$ must be given as the two matrices $m$ and $M$, containing respectively the minima and the maxima
of the entries of $\mu$.

The programs above use some auxiliary programs, precisely:

$\bullet $ reduce.m: it reduces the interval matrices;

$\bullet$  multisubsets.m: multisubsets(p,k) calculates 
the $k$-mulitsubsets of $\{1, \dots, p\}$ (each is given in lexicographic order);

$\bullet$  orderdmultisubsets.m: 
orderedmultisubsets(p,k) calculates 
the ordered $k$-mulitsubsets of $\{1, \dots, p\}$;

$\bullet$ rk1nnr.m: given a reduced interval matrix $\mu$ with  entries contained in $\R_{\geq 0}$, it
 says whether $\mu$  contains a  matrix with rank $1$;

$\bullet$ sist.m: it says whether a system as in Corollary 
\ref{corimp} is solvable.

$\bullet$ rklesseq2.m:  given a $p \times 3$ reduced interval matrix $\mu $
  with the entries
 of the first column contained in  $\R_{\geq 0}$, the program says 
 whether
  $ mrk(\mu) \leq 2$.

$\bullet $ gooddiag.m: given a $p \times p$  interval matrix $\mu$ and  two permutations 
$I$ and $J$, of $\{1,\dots, p\}$, the program says
if the generalized diagonal of $\mu$ given by $I$ and $J$ is good.

$\bullet $ rkmax.m: given a $p \times p$  interval matrix $\mu$, the program says if $Mrk(\mu)=p$.

The programs above use the theorems and the remarks in the present paper.

\bigskip

{\bf Acknowledgments.}
This work was supported by the National Group for Algebraic and Geometric Structures, and their  Applications (GNSAGA-INdAM). 
  
{\small }

\end{document}